\documentclass[11pt,a4paper]{article}

\usepackage{amsmath, amsfonts, amssymb}

\def\be#1\ee{\begin{equation}#1\end{equation}}

\newtheorem{thm}{Theorem}[section]
\newtheorem{lem}[thm]{Lemma}

\newtheorem{example}[thm]{Example}

\def\P{{\mathbb{P}}}
\def\R{\mathbb{R}}
\def\E{\mathbb{E}\,}




\def\V{\mathbb{V}}




\newenvironment{proof}[1][] {\noindent {\bf Proof#1:} }{\hspace*{\fill}$\square$\medskip\par}
\newcommand{\ind}{1\hspace{-0.098cm}\mathrm{l}}


\newcommand{\eps}{\varepsilon}

\def\dd{{\, \mbox{d}}}

\def \=L{\ {\buildrel\hbox{\scriptsize d }\over =}\ }

\let\BFseries\bfseries\def\bfseries{\BFseries\mathversion{bold}} 

\title{Large deviations for infinite weighted sums of stretched exponential random variables}

\author{Frank Aurzada}

\begin{document} 
\maketitle

\begin{abstract}
We study the large deviation probabilities of infinite weighted sums of independent random variables that have stretched exponential tails. This generalizes Kiesel and Stadtm\"uller \cite{kieselstadtmueller}, who study the same objects under the assumption of finite exponential moments, and Gantert et al.\ \cite{gantertramananrembart}, who study finite weighted sums with stretched exponential tails.
\end{abstract}

\noindent {\bf Keywords:} independent, identically distributed random variables; large deviations; stretched exponential random variables; weighted sums

\noindent {\bf 2010 Mathematics Subject Classification:} 60F10

\section{Introduction}
A classical result in probability theory is Cram\'er's theorem for the large deviations of sums of independent, identically distributed random variables: If $(X_i)$ is an i.i.d.\ sequence with zero mean and for some $t>0$ the moment generating function $\phi(t):=\E e^{t X_1 }$ is finite then
$$
\lim_{n\to\infty} \frac{1}{n} \log \P( \frac{1}{n}\sum_{i=1}^n X_i > x ) = - \sup_{t\in\R} ( t x  - \log \phi(t) ),\qquad x>0.
$$
It is also classical that Cram\'er's theorem can be extended to a full large deviation principle; and it can be seen as the starting point of large deviation theory, see e.g.\ \cite{dembozeitouni,deuschelstroock}.

Whenever the random variables $(X_i)$ do not have any finite exponential moment, the behaviour of the large deviations is different. This is due to the fact that then the large deviation event is produced by only one variable being unusually large. The classical result here (cf.\ \cite{nagaev1}) is as follows: if $(X_i)$ is an i.i.d.\ sequence with stretched exponential tail, $\log  \P( X_1  > t ) \sim - \kappa t^r$, as $t\to\infty$ for some $0<r<1$, and finite expectation then 
\begin{equation} \label{eqn:nagaev}
\lim_{n\to\infty} \frac{1}{n^r} \log \P( \frac{1}{n}\sum_{i=1}^n X_i > x ) = - \kappa (x-\E[X_1 ])^r,\qquad x>\E[X_1 ].
\end{equation}

In this paper, we study \emph{weighted} sums of i.i.d.\ random variables. There is quite some literature on large devations of weighted sums and their applications. The most recent general reference is Kiesel and Stadtm\"uller \cite{kieselstadtmueller} (also see \cite{bonin1,bonin2,book2,book1,deltuvienesaulis,giulianomacci,giulianomacci2,nagaev2} for further references). However, these papers deal with random variables that do have some finite exponential moment.

The only source, to the knowledge of the author, that deals with weighted sums of random variables that do \emph{not} have any finite exponential moment is Gantert et al.\ \cite{gantertramananrembart}. There, {\it finite} sums of the type $\sum_{i=1}^n a_i(n) X_i$ are considered when the random variables have stretched exponential tails.

In this note, we treat the case of \emph{infinite} weighted sums $\sum_{i=1}^\infty a_i(n) X_i$ with $(X_i)$ i.i.d.\ random variables having stretched exponential tails. Besides filling this gap in the literature, the motivation comes from Baysian statistics: There, one is interested in proving contraction rates for the posterior distribution for nonparametric inverse problems. There, estimates of the type studied here are important, see e.g. Lemma~5.2 in \cite{knapiksalomond}, \cite{ray}, or \cite{gugushvilivandervaartyan} for results with Gaussian priors, which require large deviation estimates of squared Gaussians, i.e.\ with exponential moments. We mention that the present results are directly motivated by a forthcoming work of S.~Agapiou and P.\ Math\'e in that area for non-Gaussian priors.


The paper is structured as follows. In Section~\ref{sec:mainresult}, we define the concrete setup for this paper and state our main result. The proofs are given in Section~\ref{sec:proofs}.

\section{Main result} \label{sec:mainresult}
Let $(a_i(n))_{i\geq 1, n=1,2,\ldots}$ be an array of non-negative numbers (let $\sup_i a_i(n)>0$ for all $n$ to avoid trivialities). Let $(X_i)$ be a sequence of non-negative i.i.d.\ random variables, copies of the random variable $X$ with tail behaviour
\begin{equation} \label{eqn:tailone}
\log \P( X  > t ) \sim -\kappa t^{r},\qquad \text{as $t\to\infty$,}
\end{equation}
for some $0<r<1$ and $\kappa>0$.

We are interested in the probability
\begin{equation} \label{eqn:finitepcase}
\P\left(  \sum_{i=1}^\infty a_i(n) X_i  > x  \right),\qquad \text{where $x>0$ and $n\to\infty$.}
\end{equation}
The large deviation regime is characterized by the condition that the typical values of $\sum_{i=1}^\infty a_i(n) X_i$ lie below $x$, i.e.\
$$
\limsup_{n\to\infty}\E\left[ \sum_{i=1}^\infty a_i(n)  X_i  \right]    < x,
$$
which we shall encode using assumption (\ref{eqn:pfinupnatcondNEW}) below. 

We can now formulate our main result, which is a ``largest jump principle'' for the large deviations of weighted sums of stretched exponential random variables. This means that the large deviation event is triggered by one of the terms in the sum being large, namely the one corresponding to the largest weight.

\begin{thm} \label{thm:upperboundpfinite}  Let $(X_i)$ be a sequence of non-negative i.i.d.\ random variables, copies of $X$ with tail behaviour (\ref{eqn:tailone}). Further, let $(a_i(n))_{i\geq 1,n=1,2,\ldots}$ be non-negative numbers with
\begin{equation}
\label{eqn:pfinupnatcondNEW}
\lim_{n\to\infty} \sum_{i=1}^\infty a_i(n)  = D\in[0,\infty)
\end{equation}
and such that $a_{\max}(n):=\max_{i\geq 1} a_i(n)>0$ and $a_{\max}(n) \to 0$. 
Then for any $x> D\cdot \E[X ]$
$$
\lim_{n\to\infty} a_{\max}(n)^r\log \P\left( \sum_{i=1}^\infty a_i(n)  X_i  > x \right) = - \kappa(x-D \cdot\E [X ])^r.
$$
\end{thm}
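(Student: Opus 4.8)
The plan is to prove the two matching bounds $\liminf_n a_{\max}(n)^r\log\P(\sum_i a_i(n)X_i>x)\ge-\kappa(x-D\E[X])^r$ and $\limsup_n a_{\max}(n)^r\log\P(\sum_i a_i(n)X_i>x)\le-\kappa(x-D\E[X])^r$. Throughout write $a_i=a_i(n)$, $a_{\max}=a_{\max}(n)$ and $y:=x-D\E[X]>0$, and note that all moments of $X$ are finite by (\ref{eqn:tailone}). For the lower bound, fix $\eps>0$ and an index $i_n$ with $a_{i_n}=a_{\max}$; by independence
$$\P\Big(\sum_i a_iX_i>x\Big)\ \ge\ \P\big(a_{\max}X_{i_n}>y+\eps\big)\,\cdot\,\P\Big(\sum_{i\ne i_n}a_iX_i>D\E[X]-\eps\Big).$$
The second factor tends to $1$, because $\E[\sum_{i\ne i_n}a_iX_i]\to D\E[X]$ while $\Var(\sum_{i\ne i_n}a_iX_i)=\Var(X)\sum_{i\ne i_n}a_i^2\le\Var(X)\,a_{\max}\sum_i a_i\to0$, so this sum converges to $D\E[X]$ in $L^2$; the first factor equals $\P(X>(y+\eps)/a_{\max})$, so $a_{\max}^r\log$ of it tends to $-\kappa(y+\eps)^r$ by (\ref{eqn:tailone}). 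Letting $\eps\downarrow0$ gives the lower bound.

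For the upper bound I would fix a small $\delta\in(0,1)$, set $b:=(1-\delta)y$, fix $\eps\in(0,\delta)$, and choose $t_\eps$ with $\P(X>t)\le\ex^{-\kappa(1-\eps)t^r}$ for $t\ge t_\eps$. Since on $\{\forall i:a_iX_i\le b\}$ one has $\sum_i a_iX_i=\sum_i a_iX_i\ind_{a_iX_i\le b}$,
$$\P\Big(\sum_i a_iX_i>x\Big)\ \le\ \P\big(\exists i:a_iX_i>b\big)\ +\ \P\Big(\sum_i a_iX_i\ind_{a_iX_i\le b}>x\Big).$$
The first term (``single big jump'') is at most $\sum_i\P(X>b/a_i)$. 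Splitting the index set into $\{i:a_i\ge\eta a_{\max}\}$, which has at most $(D+1)/(\eta a_{\max})$ elements since $\sum_i a_i$ is bounded, and its complement, for which one orders the positive weights decreasingly, uses $a_{(j)}\le(D+1)/j$ and the elementary bound $\max(u,v)^r\ge\tfrac12(u^r+v^r)$ to obtain a summable bound, and invoking $a_{\max}^r\log(1/a_{\max})\to0$ (and then $\eta$ small, $\eps\downarrow0$), one gets $\limsup_n a_{\max}^r\log\P(\exists i:a_iX_i>b)\le-\kappa b^r=-\kappa(1-\delta)^ry^r$.

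The second term (``bulk'') is the crux. Write $\xi_i:=a_iX_i\ind_{a_iX_i\le b}=a_i(X_i\wedge h_i)$ with $h_i:=b/a_i$ (for $a_i>0$; otherwise $\xi_i\equiv0$), and apply the exponential Chebyshev inequality with $\lambda_n:=\kappa(1-\delta)^ry^{r-1}a_{\max}^{-r}$: using the identity $\E\,\ex^{\mu(X\wedge h)}=1+\mu\int_0^h\ex^{\mu t}\P(X>t)\,\dd t$ together with $\log(1+u)\le u$,
$$\P\Big(\sum_i\xi_i>x\Big)\ \le\ \ex^{-\lambda_n x}\prod_i\E\,\ex^{\lambda_n\xi_i}\ \le\ \exp\Big(-\lambda_n x+\lambda_n\sum_i a_iJ_i\Big),\qquad J_i:=\int_0^{h_i}\ex^{\lambda_n a_i t}\,\P(X>t)\,\dd t.$$
The claim to prove is $\sum_i a_iJ_i\le D\E[X]+o(1)$; granting it, $\P(\sum_i\xi_i>x)\le\ex^{-\lambda_n(y-o(1))}=\exp\big(-(1-o(1))\kappa(1-\delta)^ry^r a_{\max}^{-r}\big)$, and combining the two terms and letting $\delta\downarrow0$ finishes the proof. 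For the claim I would split each $J_i$ at a level $A_n\to\infty$ so slow that $A_n^r\gg\log(1/a_{\max})$ and $\lambda_n a_{\max}A_n\to0$ (e.g.\ $A_n=(\log(1/a_{\max}))^{2/r}$): the piece over $[0,A_n]$ is $\le\ex^{\lambda_n a_{\max}A_n}\E[X]=(1+o(1))\E[X]$, hence after summing is $\le(1+o(1))\E[X]\sum_i a_i=D\E[X]+o(1)$; the piece over $[A_n,h_i]$ is, by the tail estimate, at most $\int_{A_n}^{h_i}\ex^{\phi_i(t)}\,\dd t$ with $\phi_i(t):=\lambda_n a_i t-\kappa(1-\eps)t^r$, and here $\eps<\delta$ is used crucially, since it forces $\phi_i(h_i)=\lambda_n b-\kappa(1-\eps)b^r a_i^{-r}\le-\kappa(1-\delta)^ry^r(\delta-\eps)a_{\max}^{-r}<0$ for every $i$, so that $\phi_i\le0$ throughout $[A_n,h_i]$; a case distinction according to whether $a_i$ lies below or above a threshold of order $a_{\max}$ then yields $\sum_i a_i\int_{A_n}^{h_i}\ex^{\phi_i(t)}\,\dd t\to0$, using $\phi_i(t)\le-\tfrac{\kappa(1-\eps)}{2}t^r$ on $[A_n,h_i]$ together with $\sum_i a_i\le D+1$ for the small indices, and $\phi_i(A_n)\le-\tfrac{\kappa(1-\eps)}{2}A_n^r$ together with $\#\{i:a_i\ge c\,a_{\max}\}\le(D+1)/(c\,a_{\max})$ for the large ones.

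The hard part is precisely this uniform control of $\prod_i\E\,\ex^{\lambda_n\xi_i}$ over the infinitely many indices: the truncation level $b$ and the exponent $\lambda_n\asymp a_{\max}^{-r}$ are of ``critical'' size (so $\ex^{\lambda_n b}$ is genuinely large), which forces one both to truncate slightly below the optimal level — creating the slack $\delta$ and requiring $\eps<\delta$ — and to exploit $\sum_i a_i\to D$ to absorb the contribution of the many small weights. This is also the point where the present infinite-sum statement genuinely goes beyond the finite-sum result of \cite{gantertramananrembart}, whose method treats the large weights only.
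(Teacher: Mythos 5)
Your proof is correct, and it reaches the result by the same overall strategy as the paper but with a genuinely different execution of the central estimate. The lower bound is essentially the paper's argument verbatim: a single big jump from the maximal weight times a law-of-large-numbers factor controlled by Chebyshev, using $\sum_i a_i(n)^2\le a_{\max}(n)\sum_i a_i(n)\to 0$. For the upper bound, both you and the paper split off the event that some $a_i(n)X_i$ exceeds a truncation level of order $x-D\E[X]$ (whose probability already has the correct exponential order) and then apply an exponential Chebyshev inequality with $\lambda\asymp a_{\max}(n)^{-r}$ to the truncated sum; the difference lies in how the product of truncated moment generating functions is tamed. The paper truncates at the exact level $A=x-D\E[X]$, puts the slack into the constant $B=\kappa-2\eps$, bounds $\log\E[e^{\lambda a_iX}\ind_{a_iX\le A}]$ by $\E[e^{\lambda a_iX}\ind_{a_iX\le A}]-1$, splits according to $\lambda a_iX<\eps$ versus $\ge\eps$, and kills the large part via the pointwise bound $\lambda a_iX\le B a_i^r a_{\max}^{-r}X^r$ on $\{a_iX\le A\}$ combined with Lemma~\ref{lem:elementary}. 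You instead truncate at $(1-\delta)(x-D\E[X])$, use the integral representation $\E e^{\mu(X\wedge h)}=1+\mu\int_0^h e^{\mu t}\P(X>t)\dd t$, and split the integral at an intermediate scale $A_n$ with $\log(1/a_{\max}(n))\ll A_n^r$ and $\lambda_n a_{\max}(n)A_n\to 0$; your condition $\eps<\delta$, which forces $\phi_i(h_i)<0$ and hence (since $\phi_i(t)/t^r$ is increasing, or by convexity of $\phi_i$) $\phi_i\le 0$ on all of $[0,h_i]$, plays exactly the role of the paper's requirement $b=Ba_i(n)^r/a_{\max}(n)^r<B'$ in Lemma~\ref{lem:elementary}. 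Both routes exploit $\sum_i a_i(n)\to D$ and the counting bound $\#\{i:a_i(n)\ge c\,a_{\max}(n)\}\le C/a_{\max}(n)$ in the same way to handle the infinitely many indices, so neither is clearly more elementary; yours avoids the auxiliary lemma at the price of the two-scale split in $t$. Two small points to tidy up: the asserted identity $a_iX_i\ind_{a_iX_i\le b}=a_i(X_i\wedge h_i)$ is really only ``$\le$'' (harmless, since it enlarges the quantity you bound from above), and for the large-index part of the bulk you should state explicitly that $\phi_i\le\max(\phi_i(A_n),\phi_i(h_i))$ on $[A_n,h_i]$ by convexity, so that $\int_{A_n}^{h_i}e^{\phi_i(t)}\dd t\le h_i\,e^{-\kappa(1-\eps)A_n^r/2}$ and the factor $e^{-\kappa(1-\eps)A_n^r/2}=o(a_{\max}(n))$ closes the estimate; both follow directly from what you wrote.
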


We stress that we do not need \emph{any} regularity assumption on the sequence $(a_i(n))$. Note that $\max_{i\geq 1} a_i(n)$ exists (for any $n$), because (\ref{eqn:pfinupnatcondNEW}) implies that $\lim_{i\to\infty}a_i(n)\to 0$.

\begin{example}
The classical  result (\ref{eqn:nagaev}) is retrieved for $a_i(n)=n^{-1} \ind_{i\leq n}$.
\end{example}

\begin{example} \label{exa:sergios} In a motivating example from Baysian statistics, $a_i(n)=\ind_{i\geq n}\sigma_i/\rho_n$, which gives the large deviation probability of ``remainder'' sums: $\P( \sum_{i=n}^\infty \sigma_i X_i> x \rho_n)$. Here $(\sigma_i)$ is a positive, summable sequence and $(\rho_n)$ is a positive sequence.
\end{example}

\begin{example}
The work of Gantert et al.\ \cite{gantertramananrembart} in the case of non-negative random variables can be recovered as follows. They consider arrays with $a_i(n)=0$ for $i>n$. Their condition (B) implies that (\ref{eqn:pfinupnatcondNEW}) holds with $D>0$ and that $n \cdot a_{\max}(n) \to s>0$. Note that we do not require $a_{\max}(n)$ to be of order $n^{-1}$ in this work.
\end{example}

\begin{example} Examples where $a_i(n)$ depends on $n$ in a different way are given for instance by moving averages, where
$$
a_i(n):=  \sigma_i \phi_n^{-1} \ind_{m_n \leq i \leq m_n+\phi_n-1},
$$
for positive sequences $\sigma$, $\phi$, $m$. Such objects were studied by \cite{kieselstadtmueller} under the assumption of finite exponential moments (cf.\ the remark on p.\ 938 in \cite{kieselstadtmueller}).
\end{example}

Possible extensions of the present results include the case that $X$ has a polynomial tail (rather than stretched exponential) or the precise behaviour for the case of a supremum rather than a sum (see Lemma~\ref{lem:pinftyupperbound} below for a partial result). In the spirit of Example~\ref{exa:sergios}, one could also consider $\sum_{i=N}^\infty$, where $N$ is random (cf.\ e.g.\ \cite{bonin2} for the case of finite sums). Further, one might want to add a slowly varying factor in (\ref{eqn:tailone}).

\section{Proofs}\label{sec:proofs}
%

\subsection{Auxiliary results for maxima}
We start with two results for the rate of the probability
\begin{equation} \label{eqn:pinfty}
\P( \sup_{i\geq 1} a_i(n) X_i > x), \qquad n\to\infty,
\end{equation}
which is the obvious analog of (\ref{eqn:finitepcase}). We start with a lower bound.
\begin{lem} \label{lem:lowerboundinfty}
If $a_{\max}(n)\to 0$ then for any $x>0$
$$
\liminf_{n\to\infty}  a_{\max}(n)^{r} \log \P( \sup_{i\geq 1} a_i(n) X_i > x ) \geq - \kappa x^r.
$$
If $\limsup_{n\to\infty} a_{\max}(n)> 0$ then $\liminf_{n\to\infty}\P( \sup_{i\geq 1} a_i(n) X_i > x )> 0$.
\end{lem}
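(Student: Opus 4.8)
The plan is to reduce the event $\{\sup_{i\ge 1} a_i(n) X_i > x\}$ to an event involving a single summand — the one carrying (essentially) the largest weight — and then feed in the tail asymptotics (\ref{eqn:tailone}). This is the ``easy'' half of the one-big-jump heuristic: a single term already suffices to produce the large value, so the lower bound should be governed by $\P(X > x/a_{\max}(n))$.

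First, for each $n$ I would fix an index $j(n)$ with $a_{j(n)}(n) = a_{\max}(n)$ (if the supremum of the weights were not attained, take instead $a_{j(n)}(n) \ge (1-\eps_n)a_{\max}(n)$ with $\eps_n\to 0$; this perturbation is harmless because $a_{\max}(n)\to 0$ in the first part). Since the $X_i$ and the $a_i(n)$ are non-negative, $\sup_{i\ge 1} a_i(n) X_i \ge a_{j(n)}(n) X_{j(n)}$, so
\[
\P\bigl( \sup_{i\ge 1} a_i(n) X_i > x \bigr) \;\ge\; \P\bigl( a_{\max}(n) X_{j(n)} > x \bigr) \;=\; \P\bigl( X > x / a_{\max}(n) \bigr),
\]
where the equality uses that $X_{j(n)}$ has the same law as $X$ and that $a_{\max}(n)>0$. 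For the first assertion, $a_{\max}(n)\to 0$ forces $t_n := x/a_{\max}(n) \to \infty$, so (\ref{eqn:tailone}) gives $\log\P(X>t_n) = -\kappa t_n^{r}(1+o(1)) = -\kappa x^{r} a_{\max}(n)^{-r}(1+o(1))$. Multiplying the logarithm of the displayed inequality by the positive quantity $a_{\max}(n)^{r}$ and letting $n\to\infty$ yields
\[
a_{\max}(n)^{r}\log\P\bigl( \sup_{i\ge 1} a_i(n) X_i > x \bigr) \;\ge\; a_{\max}(n)^{r}\log\P(X>t_n) \;\longrightarrow\; -\kappa x^{r},
\]
so $\liminf_{n\to\infty} a_{\max}(n)^{r}\log\P(\sup_i a_i(n) X_i > x) \ge -\kappa x^{r}$. (This is in fact a genuine limit, but only the one-sided bound is needed.)

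For the second assertion, if $\limsup_{n} a_{\max}(n) > 0$ I would extract a subsequence $(n_k)$ and a constant $c>0$ with $a_{\max}(n_k)\ge c$ for all $k$. Applying the same containment bound along this subsequence gives $\P(\sup_i a_i(n_k) X_i > x) \ge \P(X>x/c)$, and $\P(X>x/c)>0$ because (\ref{eqn:tailone}) forces $\P(X>t)>0$ for every finite $t$. Hence $\P(\sup_i a_i(n) X_i > x)$ stays bounded below by the positive constant $\P(X>x/c)$ along $(n_k)$ and does not decay to $0$. I do not anticipate a serious obstacle: all the content is in the single-term reduction, and the remaining points are bookkeeping — near-attainment of the maximal weight (handled by the $\eps_n$ device, and negligible in the first part since $a_{\max}(n)\to 0$) and the sign convention, namely that $\log$ of a probability is $\le 0$ while $a_{\max}(n)^{r}>0$, so multiplying preserves the direction of the inequality. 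The only mild subtlety is that in the second part the bound is naturally obtained along the subsequence on which $a_{\max}(n)$ stays away from $0$.
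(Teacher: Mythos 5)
Your proof is correct and follows essentially the same route as the paper: bound the supremum from below by the single term with the maximal weight and apply the tail asymptotics (\ref{eqn:tailone}) (the paper simply takes $m(n):=\min\{i\geq 1: a_i(n)=a_{\max}(n)\}$, the maximum being attained by definition of $a_{\max}$, so your $\eps_n$ device is not needed). Both parts of the lemma are handled as in the paper, so there is nothing to add.
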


\begin{proof}
The claims follow immediately from the trivial estimate
$$
\P( \sup_{i\geq 1} a_i(n) X_i > x ) \geq \P( a_{\max}(n) X_{m(n)} > x ) = \P( X  > x / a_{\max}(n)).
$$
where $m(n):=\min\{ i \geq 1 : a_i(n)=a_{\max}(n)\}$.
\end{proof}

We now turn to the corresponding upper bound. We shall prove it under more restrictive assumptions in order to avoid lengthy discussions (note that (\ref{eqn:pfinupnatcondNEW}) is not necessary for the sup-problem). The stated lemma will be one ingredient in the proof of the main result.

\begin{lem} \label{lem:pinftyupperbound} Assume that (\ref{eqn:pfinupnatcondNEW}) holds and that $a_{\max}(n)\to 0$. Then we have for any $x>0$
$$
\lim_{n\to\infty} a_{\max}(n)^{r} \log \P( \sup_{i\geq 1} a_i(n) X_i > x ) = - \kappa x^r.
$$
\end{lem}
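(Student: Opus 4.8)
The lower bound is already provided by Lemma~\ref{lem:lowerboundinfty}, so the task is the matching upper bound: for any $x>0$,
$$
\limsup_{n\to\infty} a_{\max}(n)^r \log \P\left( \sup_{i\geq 1} a_i(n) X_i > x \right) \leq -\kappa x^r.
$$
The plan is to use a union bound over the terms, splitting the index set into indices where the weight is comparable to $a_{\max}(n)$ and indices where it is much smaller. Fix a small $\eps>0$. By the tail assumption (\ref{eqn:tailone}), there is $t_0$ such that $\P(X>t) \leq \exp(-\kappa(1-\eps) t^r)$ for all $t\geq t_0$. The event $\{\sup_i a_i(n) X_i > x\}$ is contained in $\bigcup_i \{a_i(n) X_i > x\} = \bigcup_i \{X_i > x/a_i(n)\}$, so by subadditivity
$$
\P\left( \sup_{i\geq 1} a_i(n) X_i > x \right) \leq \sum_{i\geq 1} \P\left( X > \frac{x}{a_i(n)} \right).
$$
Once $a_{\max}(n)$ is small enough, $x/a_i(n) \geq x/a_{\max}(n) \geq t_0$ for every $i$ with $a_i(n)>0$, so every summand obeys $\P(X>x/a_i(n)) \leq \exp(-\kappa(1-\eps)(x/a_i(n))^r)$.

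Now I bound the sum. Since $a_i(n) \leq a_{\max}(n)$, each term is at most $\exp(-\kappa(1-\eps)(x/a_{\max}(n))^r)$, so it remains to control the number of non-negligible terms. Here is where (\ref{eqn:pfinupnatcondNEW}) enters: because $\sum_i a_i(n) \to D < \infty$, the number of indices with $a_i(n) \geq \delta$ is at most $(\sum_i a_i(n))/\delta$, which is bounded uniformly in $n$ for fixed $\delta$. Choose $\delta = \delta(\eps)$ small and split the sum into indices with $a_i(n) \geq \delta$ and those with $a_i(n) < \delta$. For the first group there are at most $C/\delta$ terms (with $C := 1 + \sup_n \sum_i a_i(n) < \infty$), each at most $\exp(-\kappa(1-\eps)(x/a_{\max}(n))^r)$, contributing $\frac{C}{\delta}\exp(-\kappa(1-\eps)(x/a_{\max}(n))^r)$. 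For the second group, I use $(x/a_i(n))^r \geq (x/\delta)^r \cdot (a_i(n)/\delta)^{r} \cdot (\delta/a_i(n))^{r}$… more simply: since $r<1$ and $a_i(n)<\delta$, write $(x/a_i(n))^r = (x/a_i(n))^{r}$ and use $(x/a_i(n))^{r} \geq a_i(n) \cdot x^r / a_i(n) \cdot (\ldots)$ — cleanly, for $a_i(n) \leq \delta \leq 1$ one has $(1/a_i(n))^r \geq (1/\delta)^{r-1}(1/a_i(n))$ because $s \mapsto s^{r-1}$ is decreasing, hence $(x/a_i(n))^r \geq x^r \delta^{r-1} / a_i(n)$. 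Therefore
$$
\sum_{i: a_i(n)<\delta} \exp\!\left(-\kappa(1-\eps)(x/a_i(n))^r\right) \leq \sum_{i: a_i(n)<\delta} \exp\!\left(-\frac{\kappa(1-\eps) x^r \delta^{r-1}}{a_i(n)}\right).
$$
Using $e^{-c/u} \leq (u/c')\,e^{-c/(2u)}$-type bounds, or directly $e^{-c/u} \leq C_\delta \, u$ for $u \leq \delta$ with $C_\delta$ depending on $\delta$ and $\kappa(1-\eps)x^r$ (since $e^{-c/u}/u \to 0$ as $u\to 0$), this is at most $C_\delta \sum_i a_i(n) \leq C_\delta C$, a constant. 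So the whole bound is $\frac{C}{\delta}\exp(-\kappa(1-\eps)(x/a_{\max}(n))^r) + C_\delta C$. The second term does not decay, which looks fatal.

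The fix is to handle the small-weight indices on the correct scale: instead of comparing to $a_{\max}(n)$, note that the contribution $\sum_{i: a_i(n)<\delta}\P(X>x/a_i(n))$ should be compared not to $x$ but absorbed differently — in fact, for the sup-problem one wants $\P(\sup_i a_i(n)X_i > x)$, and the small indices still need $X_i > x/\delta$ which is a fixed large threshold, so their total contribution is $\leq C_\delta \sum_i a_i(n) \leq C_\delta C$, a genuine constant that is $o(1)$ on the scale $\exp(-\kappa(1-\eps)(x/a_{\max}(n))^r)$ only if that exponential does not go to $0$ — but it does go to $0$. So the real argument must be: take logarithms. We have shown
$$
\P\left( \sup_{i} a_i(n) X_i > x \right) \leq \frac{C}{\delta}\,\ex^{-\kappa(1-\eps)(x/a_{\max}(n))^r} + C_\delta \sum_i a_i(n).
$$
The second term is bounded but not small, so this particular split is too lossy. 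Instead, I would iterate/refine: choose $\delta = \delta_n \to 0$ slowly, e.g. $\delta_n = a_{\max}(n)^{1/2}$, so that the number of large-weight indices is $\leq C/\delta_n = C a_{\max}(n)^{-1/2}$, which is subexponential on the scale $a_{\max}(n)^{-r}$, while the small-weight sum becomes $\sum_{i: a_i(n) < \delta_n} \exp(-\kappa(1-\eps)(x/\delta_n)^{1}\cdot \text{stuff})$ with threshold $x/\delta_n \to \infty$, so each such term is $\leq \exp(-\kappa(1-\eps)(x/\delta_n)^r)$ and the sum is $\leq C_{\delta_n}\sum_i a_i(n)$ where now $C_{\delta_n} = \sup_{u \leq \delta_n} e^{-\kappa(1-\eps)x^r u^{-(1-r)}\cdot\frac{1}{\,}\ }/u \to 0$. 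Then
$$
\P\left( \sup_{i} a_i(n) X_i > x \right) \leq C a_{\max}(n)^{-1/2}\,\ex^{-\kappa(1-\eps)(x/a_{\max}(n))^r} + o(1)\cdot\ex^{-\kappa(1-\eps)(x/\delta_n)^r}.
$$
Both terms are $\leq \exp(-\kappa(1-\eps - o(1))(x/a_{\max}(n))^r)$ since $\delta_n \geq a_{\max}(n)$ forces $(x/\delta_n)^r \leq (x/a_{\max}(n))^r$... wait, that inequality goes the wrong way for the second term. The correct choice is $\delta_n\to 0$ with $\delta_n / a_{\max}(n) \to \infty$ slowly enough that $(x/\delta_n)^r \geq (1-\eps)(x/a_{\max}(n))^r$ is impossible — so one cannot have it both ways, which is exactly why the hypothesis $\sum_i a_i(n)\to D<\infty$ (rather than just $a_{\max}(n)\to 0$) is essential and why the author flags that (\ref{eqn:pfinupnatcondNEW}) "is not necessary for the sup-problem" only in the stronger Lemma.

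\textbf{Corrected streamlined plan.} Fix $\eps>0$; pick $t_0$ with $\P(X>t)\leq \ex^{-\kappa(1-\eps)t^r}$ for $t\geq t_0$. For $n$ large, $x/a_{\max}(n)\geq t_0$. Let $N_n := \#\{i: a_i(n) > 0,\ a_i(n) \geq a_{\max}(n)^{2}\}$; by (\ref{eqn:pfinupnatcondNEW}), $N_n \leq a_{\max}(n)^{-2}\sum_i a_i(n) \leq C a_{\max}(n)^{-2}$. Then
$$
\P\left( \sup_i a_i(n) X_i > x \right) \leq \sum_i \P\!\left(X > \tfrac{x}{a_i(n)}\right) \leq N_n\,\ex^{-\kappa(1-\eps)(x/a_{\max}(n))^r} + \sum_{i:\, 0<a_i(n)<a_{\max}(n)^2}\!\!\ex^{-\kappa(1-\eps)(x/a_i(n))^r}.
$$
In the last sum $x/a_i(n) > x\,a_{\max}(n)^{-2}$, and since $r<1$ and for such $i$ we also have $a_i(n)<a_{\max}(n)^2\le a_{\max}(n)$, we get $(x/a_i(n))^r \ge x^r a_{\max}(n)^{-r}\,(a_{\max}(n)/a_i(n))^r \ge x^r a_{\max}(n)^{-r}\cdot (a_{\max}(n)^{-1})^{r}$ — again too lossy; better: $(x/a_i(n))^r = (x/a_i(n))^{r-1}\cdot (x/a_i(n)) \ge (x\,a_{\max}(n)^{-2})^{r-1}\cdot x/a_i(n)$, so $\exp(-\kappa(1-\eps)(x/a_i(n))^r) \le \exp(-c_n/a_i(n))$ with $c_n := \kappa(1-\eps)x^r a_{\max}(n)^{-2(r-1)} = \kappa(1-\eps)x^r a_{\max}(n)^{2(1-r)} \to 0$; but then $\sum_i \exp(-c_n/a_i(n)) \le \sum_i a_i(n)/(e c_n)$ (using $e^{-u}\le 1/(eu)$ wait $e^{-1/v}\le v/e$) so the sum is $\le C/(e c_n) = C' a_{\max}(n)^{-2(1-r)}$, which is subexponential on scale $a_{\max}(n)^{-r}$. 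Hence
$$
a_{\max}(n)^r \log \P\!\left(\sup_i a_i(n)X_i>x\right) \le a_{\max}(n)^r\log\!\left(2\max\{N_n,\ C' a_{\max}(n)^{-2(1-r)}\}\,\ex^{-\kappa(1-\eps)(x/a_{\max}(n))^r}\right),
$$
and since $a_{\max}(n)^r\log(a_{\max}(n)^{-k})\to 0$ for any fixed $k$, the right side tends to $-\kappa(1-\eps)x^r$. Letting $\eps\downarrow 0$ gives $\limsup \le -\kappa x^r$, which together with Lemma~\ref{lem:lowerboundinfty} proves the claim.

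\textbf{Main obstacle.} The delicate point is the bookkeeping of the two regimes: one must choose the threshold separating "large" from "small" weights so that (i) the count of large weights is only polynomially large in $a_{\max}(n)^{-1}$ — which uses $\sum_i a_i(n)$ bounded — and simultaneously (ii) the tail sum over small weights is also only polynomially large, via a convexity/Markov bound $e^{-1/v}\le v/e$ that trades an exponential in $1/a_i(n)$ for a single power of $a_i(n)$, whose sum is again controlled by (\ref{eqn:pfinupnatcondNEW}). Getting the exponents to line up so that both error factors are killed by the prefactor $a_{\max}(n)^r$ in front of the log is the crux; everything else is the standard first-moment/union-bound argument for largest-jump large deviations.
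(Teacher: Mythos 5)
There is a genuine gap, and it sits exactly at the crux you identify. In your streamlined plan, for the small-weight indices ($0<a_i(n)<a_{\max}(n)^2$) you claim
$(x/a_i(n))^r=(x/a_i(n))^{r-1}\cdot(x/a_i(n))\ \ge\ (x\,a_{\max}(n)^{-2})^{r-1}\cdot x/a_i(n)$,
hence $\ex^{-\kappa(1-\eps)(x/a_i(n))^r}\le \ex^{-c_n/a_i(n)}$. But $t\mapsto t^{r-1}$ is \emph{decreasing} (as $r<1$), and for these indices $x/a_i(n)\ge x\,a_{\max}(n)^{-2}$, so the inequality goes the other way: the claim is equivalent to $a_i(n)^{1-r}\ge a_{\max}(n)^{2(1-r)}$, i.e.\ $a_i(n)\ge a_{\max}(n)^2$ — precisely the opposite of the defining condition of that index set. (The same directional error already appears in your first attempt, where you assert $(1/a_i(n))^r\ge (1/\delta)^{r-1}(1/a_i(n))$ for $a_i(n)\le\delta\le 1$; for $a_i(n)<1$ one has $a_i(n)^{-r}<a_i(n)^{-1}$, so no bound of the form $a_i(n)^{-r}\ge c\,a_i(n)^{-1}$ with $c\ge\delta^{r-1}\ge1$ can hold.) Consequently the Markov-type trade $\ex^{-c/u}\le u/(ec)$ is applied to a quantity that does not dominate your summands, and the control of the infinite tail sum — the only nontrivial part of the upper bound — is not established. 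Your treatment of the large-weight indices (counting $N_n\le C a_{\max}(n)^{-2}$ via (\ref{eqn:pfinupnatcondNEW}) and noting that $a_{\max}(n)^r\log N_n\to0$) is fine.

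The overall architecture (union bound, uniform tail estimate, using $\sum_i a_i(n)\to D$ to make the sum effectively finite) is the same as the paper's; what is missing is a correct way to trade the stretched exponential for a summable power of $a_i(n)$. The paper does this without any large/small split: write $\ex^{-(\kappa-\eps)a_i(n)^{-r}}=\ex^{-(1-\eps)(\kappa-\eps)a_i(n)^{-r}}\cdot \ex^{-\eps(\kappa-\eps)a_i(n)^{-r}}$, bound the first factor uniformly by its value at $a_{\max}(n)$, and apply $\ex^{-y}\le y^{-2/r}$ (valid for large $y$, and $y=\eps(\kappa-\eps)a_i(n)^{-r}\to\infty$ uniformly in $i$) to the second factor, which yields a constant times $a_i(n)^2\le a_{\max}(n)\,a_i(n)$; the sum is then bounded by $(D+\eps)$ via (\ref{eqn:pfinupnatcondNEW}), and the surviving exponential gives the rate. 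Your plan would be repaired by replacing your reversed inequality with this kind of ``sacrifice an $\eps$-fraction of the exponent and convert the rest into $a_i(n)^2$'' step (or a dyadic decomposition of the weights); as written, the step fails.
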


\begin{proof} First note that we can assume w.l.o.g.\ that $x=1$, as otherwise it can be absorbed as a constant factor into the sequence $(a_i(n))$.
The lower bound already follows from Lemma~\ref{lem:lowerboundinfty}. For the upper bound, observe that
\begin{equation}
\P(  \sup_{i\geq 1} a_i(n) X_i > 1 ) 
= \P( \bigcup_{i=1}^\infty \lbrace a_i(n) X_i > 1 \rbrace)
\leq\sum_{i=1}^\infty \P( a_i(n) X_i > 1 ). \label{eqn:lastegmd}
\end{equation}

Fix $0<\eps<\kappa$. It remains to use the tail bound for $X$, which shows that the last term bounded from above as follows: For large enough $n$,
\begin{equation} \label{eqn:treatsumstandard}
\sum_{i=1}^\infty \P( a_i(n) X_i > 1 ) = \sum_{i=1}^\infty \P( X  > 1/a_i(n) ) \leq \sum_{i=1}^\infty C e^{-(\kappa-\eps) a_i(n)^{-r} },
\end{equation}
with some constant $C>0$. The remainder of the proof consists in a treatment of this sum:
\begin{eqnarray*}
 \sum_{i=1}^\infty e^{-(\kappa-\eps) a_i(n)^{-r} }
&=&
 \sum_{i=1}^\infty e^{-(1-\eps)(\kappa-\eps) a_i(n)^{-r} }\cdot e^{-\eps(\kappa-\eps)  a_i(n)^{-r} } \notag
	\\
	&\leq& e^{-(1-\eps)(\kappa-\eps) a_{\max}(n)^{-r} }\cdot   \sum_{i=1}^\infty   (\eps(\kappa-\eps)  a_i(n)^{-r})^{-2/r} \notag
		\\
	&=& e^{-(1-\eps)(\kappa-\eps) a_{\max}(n)^{-r} }\cdot   (\eps(\kappa-\eps))^{-2/r} \sum_{i=1}^\infty   a_i(n)^{2}  \notag
			\\
	&\leq& e^{-(1-\eps)(\kappa-\eps) a_{\max}(n)^{-r} }\cdot   (\eps(\kappa-\eps))^{-2/r} a_{\max}(n) \sum_{i=1}^\infty   a_i(n) \notag
	\\
	&\leq& e^{-(1-\eps)(\kappa-\eps) a_{\max}(n)^{-r} }\cdot   (\eps(\kappa-\eps))^{-2/r} (D+\eps),
\end{eqnarray*}
where we used in the second step that $e^{-x}\leq x^{-2/r}$ for large $x$ and, in the last step, the assumptions that $\sum_{i=1}^\infty   a_i(n) \to D$ and $a_{\max}(n)\to 0$. Combining this with (\ref{eqn:lastegmd}) and (\ref{eqn:treatsumstandard}) shows
$$
\log \P(  \sup_{i\geq 1} a_i(n) X_i > 1 )  \leq -(1-\eps)(\kappa-\eps) a_{\max}(n)^{-r} + \log  [C(\eps(\kappa-\eps))^{-2/r} (D+\eps)].
$$
Multiplying by $a_{\max}(n)^r$, taking first $n\to \infty$ and then $\eps\to 0$ shows the upper bound in the statement.
\end{proof}

\subsection{Proof of the main result}
Here, we give the proofs of the lower and upper bound in Theorem~\ref{thm:upperboundpfinite}, respectively.

\begin{proof}[ of the lower bound] Throughout, we use the notation $m(n):=\min\{ i \geq 1 : a_i(n)=a_{\max}(n)\}$.

Let us first treat the case that $D=0$. Then the lower bound already follows from assumption (\ref{eqn:tailone}) together with
$$
\P( \sum_{i=1}^\infty a_i(n) X_i  > x ) \geq \P( a_{\max}(n) X_{m(n)}   > x ) = \P( X   > x/a_{\max}(n) ).
$$

Assume $D>0$. Then we can fix an $\eps>0$ with $\eps<D$. We begin by noting that
\begin{multline}
\P( \sum_{i=1}^\infty a_i(n) X_i  > x ) \geq \P( a_{\max}(n) X_{m(n)}  > x - \sum_{i=1,i\neq m(n)}^\infty a_i(n) \E[ X ] (1-\eps) ) 
\\
 \cdot~ \P( \sum_{i=1,i\neq m(n)}^\infty a_i(n) X_i > \sum_{i=1,i\neq m(n)}^\infty a_i(n) \E[X ] (1-\eps) ). \label{eqn:newlower1}
\end{multline}
Since $a_{\max}(n)\to 0$, (\ref{eqn:pfinupnatcondNEW}) implies $\sum_{i=1,i\neq m(n)}^\infty a_i(n) = \sum_{i=1}^\infty a_i(n) - a_{\max}(n)\to D$. Therefore, the first term on the right-hand side of (\ref{eqn:newlower1}), by (\ref{eqn:tailone}), satisfies
\begin{eqnarray*}
&& \liminf_{n\to\infty} a_{\max}(n)^{r} \log \P( a_{\max}(n) X_{m(n)}  > x - \sum_{i=1,i\neq m(n)}^\infty a_i(n) \E [X ] (1-\eps) )
\\
&\geq &\liminf_{n\to\infty} a_{\max}(n)^{r} \log \P( X  > (x - D \E [X ] (1-\eps)^2)/a_{\max}(n))
\\
&\geq& - \kappa (x - D \E [X ] (1-\eps)^2 )^r.
\end{eqnarray*}
We will show that the second term on the right-hand side of (\ref{eqn:newlower1}) tends to one for fixed $\eps$ and $n\to\infty$. Combining this with the last formula will finish the proof of the lower bound in the theorem.

Note that, for large enough $n$,
\begin{eqnarray*}
&& \P( \sum_{i=1,i\neq m(n)}^\infty a_i(n) X_i > \sum_{i=1,i\neq m(n)}^\infty a_i(n) \E[X ] (1-\eps) )
\\
&=& \P( \sum_{i=1,i\neq m(n)}^\infty a_i(n) (X_i-\E[X_i]) >  -\eps \E[X ] \sum_{i=1,i\neq m(n)}^\infty a_i(n) )
\\
&\geq & \P( \sum_{i=1,i\neq m(n)}^\infty a_i(n) (X_i-\E[X_i]) >  -\eps \E[X ] (D-\eps)).
\end{eqnarray*}
The last term tends to one, since by Chebyshev's inequality

\begin{eqnarray*}
&& \P( \sum_{i=1,i\neq m(n)}^\infty a_i(n) (X_i-\E[X_i]) \leq  -\eps \E[X ] (D-\eps))
\\
&\leq&\P\left( \left|\sum_{i=1,i\neq m(n)}^\infty a_i(n) (X_i-\E[X_i])\right| \geq  \eps \E[X ] (D-\eps)\right)
\\
&\leq& (\eps \E[X ] (D-\eps))^{-2} \cdot\V\left[ \sum_{i=1,i\neq m(n)}^\infty a_i(n) (X_i-\E[X_i]) \right]
\\
&=& (\eps \E[X ] (D-\eps))^{-2} \cdot\V[ X] \cdot\sum_{i=1,i\neq m(n)}^\infty a_i(n)^2
\\
&\leq& (\eps \E[X ] (D-\eps))^{-2} \cdot \V[X] \cdot a_{\max}(n) \cdot \sum_{i=1}^\infty a_i(n),
\end{eqnarray*}
which tends to zero (because the sum is bounded, by (\ref{eqn:pfinupnatcondNEW}), and $a_{\max}(n)\to 0$), as required.
\end{proof}

\begin{proof}[ of the upper bound]
The first observation is that we can assume w.l.o.g.\ that $x=1$, as $x$ can be absorbed as a constant factor into the sequence $(a_i(n))$.

{\it Step 1: Reduction step, main argument, overview.}

Set $A:=x-D \E[X ]=1-D \E[X ]$ and note that $A>0$, by assumption. Further, fix $0<\eps<\kappa/2$ such that also $1-(1+\eps) D \E[X ] > 0$. First note that
\begin{eqnarray*}
 \P(  \sum_{i=1}^\infty a_i(n) X_i  > 1 )
&\leq& \P( \sum_{i=1}^\infty a_i(n) X_i  > 1, \sup_{i\geq 1} a_i(n) X_i \leq A )
\\ && \qquad + \,\P( \sup_{i\geq 1} a_i(n) X_i > A ),
\end{eqnarray*}
and the second term can be treated with Lemma~\ref{lem:pinftyupperbound}, which shows that the second term has asymptotic order $\exp(-\kappa a_{\max}(n)^{-r} A^r (1+o(1)))$, as required by the assertion. If we can show that the first term is of the same or lower order, we obtain the statement.

{\it Step 2: Exponential Chebychev inequality for the truncated random variables.}

Let us consider the first term: For any $\lambda>0$, by the Markov inequality,
{\allowdisplaybreaks
\begin{eqnarray}
&& \P( \sum_{i=1}^\infty a_i(n) X_i  > 1, \sup_{i\geq 1} a_i(n) X_i \leq A  )
\notag
\\
& =& \P( e^{\lambda\sum_{i=1}^\infty a_i(n) X_i}  > e^{\lambda} , \sup_{i\geq 1} a_i(n) X_i \leq A   )
\notag
\\
& \leq & e^{-\lambda} \E[ e^{\lambda\sum_{i=1}^\infty a_i(n) X_i}  , \sup_{i\geq 1} a_i(n) X_i \leq A  ]
\notag
\\
& = & e^{-\lambda} \prod_{i=1}^\infty \E[ e^{\lambda a_i(n) X }  \ind_{a_i(n) X \leq A}   ]
\notag
\\
&=&
\exp\left( -\lambda +\sum_{i=1}^\infty \log \E[ e^{\lambda a_i(n) X } \ind_{a_i(n) X \leq A} ]\right) \notag
\\
&\leq&
\exp\left( -\lambda +\sum_{i=1}^\infty \left( \E[ e^{\lambda a_i(n) X } \ind_{a_i(n) X \leq A}] -  1 \right)\right) \notag
\\
&\leq&
\exp\left( -\lambda +\sum_{i=1}^\infty  \E[ (e^{\lambda a_i(n) X } -  1) \ind_{a_i(n) X \leq A} ]\right). \label{eqn:infinitecomputation1}
\end{eqnarray}
}

Let us deal with the sum. Note that for $0\leq x\leq \eps$ we have $e^x-1\leq \frac{e^{\eps}-1}{\eps} x\leq (1+\eps)x$ (for $\eps$ small enough). Thus

{\allowdisplaybreaks
\begin{eqnarray}
&&
\sum_{i=1}^\infty  \E[ (e^{\lambda a_i(n) X } -  1) \ind_{a_i(n) X \leq A} ]  \notag
\\
&=&
\sum_{i=1}^\infty  \E[ (e^{\lambda a_i(n) X } -  1) \ind_{a_i(n) X \leq A, \lambda a_i(n) X  < \eps} ] + \sum_{i=1}^\infty  \E[ (e^{\lambda a_i(n) X } -  1) \ind_{a_i(n) X \leq A, \lambda a_i(n) X  \geq \eps} ]  \notag
\\
&\leq &
\sum_{i=1}^\infty  \E[ (1+\eps)\lambda a_i(n) X   \ind_{a_i(n) X \leq A, \lambda a_i(n) X  < \eps} ] + \sum_{i=1}^\infty  \E[ (e^{\lambda a_i(n) X }-1)  \ind_{a_i(n) X \leq A, \lambda a_i(n) X  \geq \eps} ]  \notag
\\
&\leq &
(1+\eps) \lambda \, \E[ X  ]\sum_{i=1}^\infty a_i(n) + \sum_{i=1}^\infty  \E[ (e^{\lambda a_i(n) X }-1)  \ind_{a_i(n) X \leq A, \lambda a_i(n) X  \geq \eps} ].  \label{eqn:infinitecomputation2}
\end{eqnarray}
}

Setting $B:=\kappa-2\eps$ we shall use the last estimate with
$$
\lambda:=\frac{B A^{r-1}}{a_{\max}(n)^r}.
$$

{\it Step 3: We show that the second sum in (\ref{eqn:infinitecomputation2}) tends to zero for fixed $\eps>0$ and $n\to\infty$.}

First note that if $a_i(n) X \leq A$ then -- using $r<1$ -- we have
$$
\lambda a_i(n) X  = \frac{B A^{r-1} a_i(n)}{a_{\max}(n)^r} \cdot X ^{1-r} \cdot X ^r \leq \frac{B A^{r-1} a_i(n)}{ a_{\max}(n)^r}\cdot \frac{A^{1-r}}{a_i(n)^{1-r}}\cdot X ^r = \frac{B a_i(n)^r}{a_{\max}(n)^r}\cdot X ^r.
$$
Therefore,
\begin{equation} \label{eqn:tbdelemenatr}
\E[ (e^{\lambda a_i(n) X }-1)  \ind_{a_i(n) X \leq A, \lambda a_i(n) X  \geq \eps} ] \leq \E[ (e^{\frac{B a_i(n)^r}{a_{\max}(n)^r}\cdot X ^r} -1) \cdot \ind_{\lambda a_i(n) X  \geq \eps} ].
\end{equation}

Further, it is elementary to show  (see Lemma~\ref{lem:elementary} below) that due to the tail estimate (\ref{eqn:tailone}), which we use in the form $\P(X >t)\leq k \exp( - B' t^r)$ for all $t>0$ and some $k>0$, where $B':=\kappa-\eps$, we have
$$
\E[ ( e^{b X ^r}-1) \ind_{X >a} ] \leq \frac{k}{1-b/B'} e^{-(B'-b)a^r},
$$
for any $a,b>0$ with $b<B'$.

In our case, $b:=B a_i(n)^r/a_{\max}(n)^r\leq B< B'$ and $a:=\eps a_i(n)^{-1} \lambda^{-1}$. Therefore, we see that the term on the right-hand side of (\ref{eqn:tbdelemenatr}) is  bounded from above by
\begin{eqnarray*}
&& \frac{k}{1-\frac{(\kappa-2\eps)a_i(n)^r}{(\kappa-\eps)a_{\max}(n)^r}}\, \exp(- (\kappa-\eps-(\kappa-2\eps) a_i(n)^r/a_{\max}(n)^r) \cdot [ \eps a_i(n)^{-1} \lambda^{-1}]^r )
\\
&\leq& k\,\frac{\kappa-\eps}{\eps} \, \exp(- \eps \cdot B^{-r} \eps^r A^{r(1-r)} [a_i(n)^{-1} a_{\max}(n)^r]^r ).
\end{eqnarray*}
The second sum in (\ref{eqn:infinitecomputation2}) is therefore  bounded from above by
$$
c_\eps \sum_{i=1}^\infty e^{- 2K [a_i(n)^{-1} a_{\max}(n)^r]^r },
$$
where $2K=2K(\eps):=\eps^{1+r} B^{-r} A^{r(1-r)}$ and $c_\eps:=k(\kappa-\eps)/\eps$. This can be treated as follows: Since $e^{-x}\leq x^{-1/r}$ for large enough $x$, we have
\begin{eqnarray*}
\sum_{i=1}^\infty e^{- 2K [a_i(n)^{-1} a_{\max}(n)^r]^r }
&=&\sum_{i=1}^\infty e^{- K [a_i(n)^{-1} a_{\max}(n)^r]^r } \cdot e^{- K [a_i(n)^{-1} a_{\max}(n)^r]^r } 
\\
&\leq &\sum_{i=1}^\infty \left( K [a_i(n)^{-1} a_{\max}(n)^r]^r \right)^{-1/r} \cdot e^{- K a_{\max}(n)^{-(1-r)r}} 
\\
&= & K^{-1/r} e^{- K a_{\max}(n)^{-(1-r)r} } a_{\max}(n)^{-r} \sum_{i=1}^\infty a_i(n).
\end{eqnarray*}
Now, $\sum_{i=1}^\infty a_i(n)$ is bounded by assumption (\ref{eqn:pfinupnatcondNEW}). Further, since $a_{\max}(n)\to 0$, the term $e^{- K a_{\max}(n)^{-(1-r)r} }  a_{\max}(n)^{-r}$ tends to zero for fixed $\eps$ and $n\to\infty$. This finishes the proof of the fact that the second sum in (\ref{eqn:infinitecomputation2}) tends to zero.

{\it Step 4: Final computations.}
Putting Step 3 together with  (\ref{eqn:infinitecomputation1}) and (\ref{eqn:infinitecomputation2}), 
we have seen that for fixed $\eps$ and $n\to\infty$
\begin{eqnarray*}
&&\log \P( \sum_{i=1}^\infty a_i(n) X_i  > 1, \sup_{i\geq 1} a_i(n) X_i \leq A   )
\\
&\leq& - \lambda + (1+\eps) \lambda \E[ X ] \sum_{i=1}^\infty a_i(n) +o(1).
\\
&=& - \frac{B A^{r-1}}{a_{\max}(n)^r }\left[ 1  - (1+\eps) \E[ X ] \sum_{i=1}^\infty a_i(n)  - o(1) \right].
\end{eqnarray*}

Multiplying by $a_{\max}(n)^r$ and using  (\ref{eqn:pfinupnatcondNEW}), we obtain
\begin{eqnarray*}
 && \limsup_{n\to\infty}  a_{\max}(n)^r   \log \P( \sum_{i=1}^\infty a_i(n) X_i  > 1, \sup_{i\geq 1} a_i(n) X_i \leq A   )
\\
&\leq& - B A^{r-1} ( 1- (1+\eps)\E[X ] D)   = -(\kappa-2\eps) A^{r-1} ( 1- (1+\eps)\E[X ] D).
\end{eqnarray*}
Letting $\eps\to 0$ shows the assertion.
\end{proof}

During the course of the last proof, we used the following completely elementary lemma.

\begin{lem} \label{lem:elementary}
Let $X$ be a non-negative random variable with $\P(X>t)\leq k e^{-B't^r}$ for all $t>0$ and $k,B',r>0$. Then, for any $a>0$ and any $0<b<B'$,
$$
\E[ (e^{bX^r}-1) \ind_{X>a} ] \leq \frac{k}{1-b/B'} e^{-(B'-b)a^r}.
$$
\end{lem}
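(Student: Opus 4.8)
\textbf{Proof plan for Lemma~\ref{lem:elementary}.}

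The plan is to reduce the claim to a direct integration using the tail bound, after rewriting the expectation of $(e^{bX^r}-1)\ind_{X>a}$ in a form that exposes the distribution function of $X$. First I would use the layer-cake (Fubini) representation: for a non-negative random variable and the increasing function $t\mapsto e^{bt^r}-1$, which vanishes at $t=0$, we have
\[
\E\bigl[(e^{bX^r}-1)\ind_{X>a}\bigr]
= \int_{a}^{\infty} \P(X>t)\, \d\bigl(e^{bt^r}-1\bigr) + (e^{ba^r}-1)\,\P(X>a).
\]
Here the boundary term arises because the function $e^{bt^r}-1$ is not zero at the lower endpoint $t=a$; alternatively one can simply write $\E[(e^{bX^r}-1)\ind_{X>a}] = \int_0^\infty \P(e^{bX^r}-1 > s,\, X>a)\,\d s$ and split at $s = e^{ba^r}-1$, which produces the same two contributions. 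Either way, one is left with a genuine one-dimensional integral against $\P(X>t)$ plus an easily controlled boundary term.

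The second step is to insert the hypothesis $\P(X>t)\leq k e^{-B't^r}$ into both pieces. For the boundary term this gives $(e^{ba^r}-1)\P(X>a)\leq k e^{ba^r}e^{-B'a^r} = k e^{-(B'-b)a^r}$ directly. For the integral, after the change of variables $u=t^r$ (so $\d(e^{bt^r}-1)=b e^{bu}\d u$ on $u\geq a^r$), I would bound
\[
\int_{a^r}^\infty k e^{-B'u}\, b\, e^{bu}\, \d u
= kb\int_{a^r}^\infty e^{-(B'-b)u}\,\d u
= \frac{kb}{B'-b}\, e^{-(B'-b)a^r},
\]
which is where the condition $b<B'$ is essential to make the integral converge. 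Adding the two contributions yields $k e^{-(B'-b)a^r}\bigl(1 + b/(B'-b)\bigr) = \frac{k}{1-b/B'}e^{-(B'-b)a^r}$, exactly the claimed bound.

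I do not expect any real obstacle here: the lemma is elementary, as the paper says. The only mild care needed is in justifying the integration-by-parts / layer-cake identity when $X$ may not have a density — this is handled by the standard Fubini argument $\E[\psi(X)\ind_{X>a}] = \psi(a)\P(X>a) + \int_a^\infty \P(X>t)\,\psi'(t)\,\d t$ valid for any non-decreasing absolutely continuous $\psi$ with the integral interpreted as a Lebesgue integral — and in checking that all quantities are finite, which again follows from $b<B'$ together with $\P(X>t)\le k e^{-B't^r}$. One could also avoid integration by parts entirely and work from $\E[(e^{bX^r}-1)\ind_{X>a}]=\int_0^\infty \P(X>a,\ e^{bX^r}-1>s)\,\d s$, bounding the integrand by $\min\{\P(X>a),\,\P(X>(b^{-1}\log(1+s))^{1/r})\}$ and using the tail estimate on the second factor; this gives the same constants after an elementary computation.
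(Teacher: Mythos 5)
Your proposal is correct and takes essentially the same approach as the paper: both rewrite the expectation via a layer-cake/Fubini identity (the paper writes $e^{bX^r}-1=\int_1^{e^{bX^r}}\d s$ and splits at $s=e^{ba^r}$, which is exactly your boundary term plus tail integral), insert the tail bound, and arrive at the same constant $1+\frac{b}{B'-b}=\frac{1}{1-b/B'}$. No gaps.
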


\begin{samepage}
\begin{proof}
Note that
\begin{eqnarray*}
\E[ (e^{bX^r}-1) \ind_{X>a} ] &=& \E[ \int_1^{e^{bX^r}} \dd s \ind_{X>a} ]
\\
&=&  \int_1^\infty\E[ \ind_{(b^{-1}\log s)^{1/r}< X } \ind_{X>a} ] \dd s
\\
&=&  \int_1^{e^{b a^r}} \P(X>a)  \dd s+\int_{e^{b a^r}}^\infty \P(X>(b^{-1}\log s)^{1/r})  \dd s
\\
&\leq& k\left( \int_0^{e^{b a^r}} e^{-B'a^r}  \dd s+\int_{e^{b a^r}}^\infty e^{-B' b^{-1} \log s}  \dd s\right)
\\
&=& k\left( e^{-(B'-b)a^r}  + \frac{1}{B'/b-1} e^{(1-B' b^{-1})b a^r}\right)
\\
&=& ke^{-(B'-b)a^r} \frac{1}{1-b/B'}.
\end{eqnarray*}
\end{proof}
\end{samepage}

\noindent\textbf{Acknowledgement.} The author is indebted to Sergios Agapiou (University of Cyprus) and Peter Math\'e (WIAS Berlin) for bringing this problem to his attention and to Marvin Kettner (Darmstadt) for valuable suggestions.

\bibliographystyle{abbrv}

\begin{thebibliography}{10}

\bibitem{bonin1}
O.~Bonin.
\newblock Large deviation theorems for weighted sums applied to a geographical
  problem.
\newblock {\em J. Appl. Probab.}, 39(2):251--260, 2002.

\bibitem{bonin2}
O.~Bonin.
\newblock Large deviation theorems for weighted compound {P}oisson sums.
\newblock {\em Probab. Math. Statist.}, 23(2, Acta Univ. Wratislav. No.
  2593):357--368, 2003.

\bibitem{book2}
S.~A. Book.
\newblock Large deviation probabilities for weighted sums.
\newblock {\em Ann. Math. Statist.}, 43:1221--1234, 1972.

\bibitem{book1}
S.~A. Book.
\newblock A large deviation theorem for weighted sums.
\newblock {\em Z. Wahrscheinlichkeitstheorie und Verw. Gebiete}, 26:43--49,
  1973.

\bibitem{deltuvienesaulis}
D.~Deltuvien\.{e} and L.~Saulis.
\newblock Asymptotic expansion of the distribution density function for the sum
  of random variables in the series scheme in large deviation zones.
\newblock In {\em Proceedings of the {E}ighth {V}ilnius {C}onference on
  {P}robability {T}heory and {M}athematical {S}tatistics, {P}art {I} (2002)},
  volume~78, pages 87--97, 2003.

\bibitem{dembozeitouni}
A.~Dembo and O.~Zeitouni.
\newblock {\em Large deviations techniques and applications}, volume~38 of {\em
  Applications of Mathematics (New York)}.
\newblock Springer-Verlag, New York, second edition, 1998.

\bibitem{deuschelstroock}
J.-D. Deuschel and D.~W. Stroock.
\newblock {\em Large deviations}, volume 137 of {\em Pure and Applied
  Mathematics}.
\newblock Academic Press, Inc., Boston, MA, 1989.

\bibitem{gantertramananrembart}
N.~Gantert, K.~Ramanan, and F.~Rembart.
\newblock Large deviations for weighted sums of stretched exponential random
  variables.
\newblock {\em Electron. Commun. Probab.}, 19:no.\ 41, 2014.

\bibitem{giulianomacci}
R.~Giuliano and C.~Macci.
\newblock Large deviation principles for sequences of logarithmically weighted
  means.
\newblock {\em J. Math. Anal. Appl.}, 378(2):555--570, 2011.

\bibitem{giulianomacci2}
R.~Giuliano and C.~Macci.
\newblock Large deviations for some normalized sums of exponentially
  distributed random variables.
\newblock {\em Ann. Math. Inform.}, 39:109--123, 2012.

\bibitem{gugushvilivandervaartyan}
S.~Gugushvili, A.~W. van~der Vaart, and D.~Yan.
\newblock Bayesian inverse problems with partial observations.
\newblock {\em Trans. A. Razmadze Math. Inst.}, 172(3, part A):388--403, 2018.

\bibitem{kieselstadtmueller}
R.~Kiesel and U.~Stadtm\"{u}ller.
\newblock A large deviation principle for weighted sums of independent
  identically distributed random variables.
\newblock {\em J. Math. Anal. Appl.}, 251(2):929--939, 2000.

\bibitem{knapiksalomond}
B.~Knapik and J.-B. Salomond.
\newblock A general approach to posterior contraction in nonparametric inverse
  problems.
\newblock {\em Bernoulli}, 24(3):2091--2121, 2018.

\bibitem{nagaev1}
A.~V. Nagaev.
\newblock Integral limit theorems with regard to large deviations when
  {C}ram\'{e}r's condition is not satisfied. {I}.
\newblock {\em Teor. Verojatnost. i Primenen.}, 14:51--63, 1969.

\bibitem{nagaev2}
S.~V. Nagaev.
\newblock Large deviations of sums of independent random variables.
\newblock {\em Ann. Probab.}, 7(5):745--789, 1979.

\bibitem{ray}
K.~Ray.
\newblock Bayesian inverse problems with non-conjugate priors.
\newblock {\em Electron. J. Stat.}, 7:2516--2549, 2013.

\end{thebibliography}

\end{document}